\newtheorem{lemma}{Lemma}
\newtheorem{corollary}{Corollary}
\newtheorem{theorem}{Theorem}
\newtheorem{proposition}{Proposition}
\theoremstyle{definition}
\newtheorem{remark}{Remark}
\DeclareMathOperator{\Diff}{Diff}
\DeclareMathOperator{\ad}{ad}
\DeclareMathOperator{\Char}{char}
\DeclareMathOperator{\End}{End}
\DeclareMathOperator{\sL}{sl}
\DeclareMathOperator{\Ker}{Ker}
\DeclareMathOperator{\St}{St}
\newcommand{\myeq}{\stackrel{\mathclap{\normalfont\mbox{def}}}{=}}
\begin{document}

\title{Prime Lie algebras satisfying the standard Lie identity of degree $5$}

\begin{center}
{\Large\bf Prime Lie algebras satisfying the standard Lie identity of degree~$5$}

\smallskip

Gleb Pogudin\footnote{Institute for Algebra, Johannes Kepler University, Linz, Austria, e-mail: pogudin.gleb@gmail.com},\\
Yuri P. Razmyslov\footnote{Department of Mechanics and Mathematics, Moscow State University}

\end{center}

\begin{abstract}
	For every commutative differential algebra one can define the Lie algebra of special derivations.
    It is known for years that not every Lie algebra can be embedded to the Lie algebra of special derivations of some differential algebra.
    More precisely, the Lie algebra of special derivations of a commutative algebra always satisfy the standard Lie identity of degree $5$.
    The problem of existence of such embedding is a long-standing problem (see \cite{YuPSt5,PoinsotAAM,Poinsot}), which is closely related to the Lie algebra of vector fields on the affine line (see \cite{YuPSt5}).
    It was solved by Razmyslov in \cite{YuPSt5} for simple Lie algebras satisfying this identity (see also \cite[Th. 16]{Poinsot}).
    We extend this result to prime (and semiprime) Lie algebras over a field of zero characteristic satisfying the standard Lie identity of degree~$5$.
    As an application, we prove that for any semiprime Lie algebra the standard identity $St_5$ implies all other identities of the Lie algebra of polynomial vector fields on the affine line.
    
    We also generalize some previous results about primeness of the Lie algebra of special derivations of a prime differential algebra to the case of non-unitary differential algebra. 

	\emph{Key words: } Lie algebras; identities in Lie algebras; representation theory of Lie algebras; differential algebra.

    \emph{MSC2010: 17B66; 17B01.}
\end{abstract}

\section{Preliminaries}

    Throughout the paper all fields are assumed to be of characteristic zero.
	Let $R$ be a ring.
	A map $D\colon R \to R$ satisfying $D(a + b) = D(a) + D(b)$ and $D(ab) = aD(b) + D(a)b$ for all $a, b \in R$ is called a \textit{derivation}.
    A \textit{differential ring} $R$ is a ring with a specified derivation. 
	In this case we will denote $D(x)$ by $x^{\prime}$ and $D^n(x)$ by $x^{(n)}$.
	A differential ring which is a field will be called a \textit{differential field}.
    For a nonzero $s \in A$ by $A_s$ we denote the localization of $A$ with respect to $s$.
    Furthermore, if $A$ is a differential ring, the derivation on $A$ can be uniquely extended to $A_s$.
    
    A differential ring $A$ is said to be a \textit{differential $k$-algebra} over a field $k$ if $A$ is a $k$-algebra and the derivation is $k$-linear.
	An ideal $I$ of a differential ring $R$ is said to be a \textit{differential ideal}, if $a^{\prime} \in I$ for all $a \in I$.
	The differential ideal generated by $a_1, \ldots, a_n \in I$ will be denoted by $[a_1, \ldots, a_n]$.
    A differential ring $A$ is \textit{prime} if $I \cdot J \neq 0$ for any nonzero differential ideals $I$ and $J$.
	A differential ring $A$ is \textit{semiprime} if $I^2 \neq 0$ for any nonzero differential ideal $I$.
    Let us remark that the definition above is a special case of the general notion of a prime algebra of an arbitrary signature (see \cite[\S 3]{YuPmonogr}), where a differential algebra is considered as an algebra with binary multiplication and unary derivation.

	Let $A$ be a differential $k$-algebra.
    We consider the polynomial ring $A[x, x^{\prime}, \ldots, x^{(n)}, \ldots]$, where $x, x^{\prime}, x^{\prime\prime}, \ldots$ are algebraically independent variables.
    Extending the derivation from $A$ to $A[x, x^{\prime}, \ldots]$ by $D(x^{(n)}) = x^{(n + 1)}$ we obtain a differential algebra.
    This algebra is called the \textit{algebra of differential polynomials} in $x$ over $A$ and we denote it by $A\{ x\}$.
    Iterating this construction, we define the algebra of differential polynomials in variables $x_1, \ldots, x_n$ over $A$ and denote it by $A\{x_1, \ldots, x_n\}$.

	A Lie algebra $L$ is said to be \textit{prime} if $[A, B] \neq 0$ for any nonzero ideals $A, B \subset L$. 
    A Lie algebra $L$ is said to be \textit{semiprime} if $[A, A] \neq 0$ for any nonzero ideal $A \subset L$. 
    

\section{On the embedding of a Lie algebra in the Lie algebra of special derivations}
    For a differential $k$-algebra $A$, let us define \textit{the Lie algebra of special derivations} $\Diff A = \{ a\partial | a\in A \}$. 
    Lie bracket on $\Diff A$ is defined by:
    $$
  	[a\partial, b\partial] = (ab^{\prime} - a^{\prime}b) \partial
   	$$
	$\Diff A$ can be thought as an algebra of homogeneous linear differential operators over $A$. 
    See also \cite[\S 46]{YuPmonogr}.
    
    Not all Lie algebras can be represented as a subalgebra of the Lie algebra of special derivations of some differential algebra.
	It is well-known (see~\cite{YuPSt5}) that:
    
    \begin{proposition}\label{prop:St5}
		Let $A$ be a differential $k$-algebra.
        Then, algebra $\Diff A$ satisfies the standard Lie identity of degree $5$ (shortly, $\St_5$):
        
        $$
    	\sum\limits_{\sigma \in S_4} (-1)^{\sigma} \ad x_{\sigma(1)} \ad x_{\sigma(2)} \ad x_{\sigma(3)} \ad x_{\sigma(4)} z,
    	$$

		where $(-1)^{\sigma}$ is a sign of permutation $\sigma$.
	\end{proposition}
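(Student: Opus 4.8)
The plan is to realize the adjoint representation of $\Diff A$ by first-order linear differential operators and then to extract the identity from an elementary counting argument. Under the $k$-linear identification $c\partial \leftrightarrow c$ of $\Diff A$ with $A$, the operator $\ad(a\partial)$ sends $c \mapsto ac' - a'c$, i.e.\ it is the first-order differential operator $D_a := aD - a'$, viewed inside the Ore extension $\mathcal{A} = \bigoplus_{n \geq 0} A\,D^n$ of $A$ by its derivation (with the rule $D\cdot a = aD + a'$ for $a \in A$). Since iterated Lie brackets in $\Diff A$ correspond to composition of these operators, one computes
$$
\St_5(a_1\partial, a_2\partial, a_3\partial, a_4\partial;\, c\partial) = (\Phi \cdot c)\,\partial,
\qquad
\Phi := \sum_{\sigma \in S_4} (-1)^{\sigma}\, D_{a_{\sigma(1)}} D_{a_{\sigma(2)}} D_{a_{\sigma(3)}} D_{a_{\sigma(4)}} \in \mathcal{A},
$$
so it suffices to prove $\Phi = 0$. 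As $\Phi$ is a fixed differential-polynomial expression in $a_1, \dots, a_4$, I would verify this assuming $a_1, \dots, a_4$ to be independent differential indeterminates over $k$; the general statement then follows by specialization.

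The key structural observation is that $\mathcal{A}$ is graded by \emph{weight}, where $D$ has weight $1$ and $a_i^{(e)}$ has weight $e$; this is consistent with the commutation rule $D \cdot a_i^{(e)} = a_i^{(e)}D + a_i^{(e+1)}$. Each $D_{a_i} = a_i D - a_i'$ is homogeneous of weight $1$, so every summand $D_{a_{\sigma(1)}} D_{a_{\sigma(2)}} D_{a_{\sigma(3)}} D_{a_{\sigma(4)}}$ is homogeneous of weight $4$; moreover it is multilinear in $a_1, \dots, a_4$, because $D_a$ depends linearly on $a$. Hence, for each $m \geq 0$, the coefficient of $D^m$ in $\Phi$ is a $k$-linear combination of monomials $a_1^{(e_1)} a_2^{(e_2)} a_3^{(e_3)} a_4^{(e_4)}$ with $e_1 + e_2 + e_3 + e_4 = 4 - m$.

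It then remains to see that each such monomial contributes $0$ to the alternating sum. Since $e_1 + e_2 + e_3 + e_4 = 4 - m \leq 4 < 0 + 1 + 2 + 3$, the exponents $e_1, \dots, e_4$ cannot be pairwise distinct; choosing $i \neq j$ with $e_i = e_j$, the monomial $a_1^{(e_1)} a_2^{(e_2)} a_3^{(e_3)} a_4^{(e_4)}$ is unchanged when the indeterminates $a_i$ and $a_j$ are interchanged, so the contributions of $\sigma$ and of $\sigma \circ (i\, j)$ to its coefficient in $\Phi$ cancel. As this accounts for every monomial in every $D^m$-component, $\Phi = 0$, and therefore $\Diff A$ satisfies $\St_5$. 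The only part of the proof that is not purely formal is setting up the weight grading and the multilinearity compatibly with normal-ordering in $\mathcal{A}$ — equivalently, making the reduction to free differential indeterminates precise; once this bookkeeping is in place, the whole statement collapses to the numerical inequality $4 < 0+1+2+3$, which also makes transparent why the degree is $5$.
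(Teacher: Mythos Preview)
The paper does not give a proof of this proposition; it records it as well known with a reference to \cite{YuPSt5}. Your argument is correct and self-contained. Identifying $\ad(a\partial)$ with the first-order operator $aD-a'$ in the Ore extension and using the weight grading to force $e_1+\cdots+e_4=4-m\le 4<0+1+2+3$, so that some $e_i=e_j$, is exactly the right mechanism; the antisymmetry of $\Phi$ in $a_1,\dots,a_4$ then kills every such monomial. (A tiny notational point: swapping the indeterminates $a_i,a_j$ carries the $\sigma$-term to the $(i\,j)\sigma$-term rather than the $\sigma(i\,j)$-term, but since $\Phi$ is alternating either coset pairing gives the cancellation and nothing in the argument is affected.)
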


	In \cite{YuPSt5} Yu.~Razmyslov proved that in the case of simple Lie algebra the converse holds.
    Precisely, he proved that every simple Lie algebra over a field with $\Char k \neq 2$ satisfying the standard Lie identity of degree $5$ can be embedded to the Lie algebra of special derivations of some differential algebra. 
    Using this embedding, Razmyslov proved that for any simple Lie algebra the standard identity $St_5$ implies all other identities of the Lie algebra of polynomial vector fields on the affine line.
    In this paper we extend this result to semiprime Lie algebras (see Corolary~\ref{cor:identities}).
    
    In his papers \cite{Poinsot,PoinsotAAM}, L.~Poinsot used a language of category theory to describe the correspondence between differential algebra and its Lie algebra of special derivations. 
    It turns out that the correspondence is functorial.
    This functor has left adjoint, so for every Lie algebra $L$ we assign a differential algebra.
    Poinsot calls the Lie algebra of special derivations of this differential algebra \emph{the Wronskian envelope of $L$}, and Theorem~9 in \cite{Poinsot} (see also Proposition~49 in \cite{PoinsotAAM}) describes the Wronskian envelope of $L$ in terms of generators and relations.
    Proposition~\ref{prop:St5} shows that the canonical homomorphism from $L$ to its Wronskian envelope need not be injective (see also Example~50 in \cite{PoinsotAAM}).
    If it turns out to be injective, then $L$ is said to be \emph{Wronskian special} (see \cite[\S 4]{Poinsot}).
        
    Using result of Yu.~Razmyslov, L.~Poinsot proved (see \cite[Th. 16]{Poinsot}) that subalgebras of the direct product, where each component is either simple and satisfies $\St_5$ or is abelian, are Wronskian special.
    Our goal is to prove that every semiprime Lie algebra satisfying the standard Lie identity of degree $5$ is Wronskian special (see Corollary~\ref{cor:semiprime}).
    

	We will use the following construction from \cite{YuPSt5}. 
    Let us consider a Lie algebra $L$ satisfying the standard Lie identity of degree $5$.
    By $\langle g_1, g_2, g_3 \rangle$ we denote the element 
    $$\langle g_1, g_2, g_3 \rangle \myeq \sum\limits_{\sigma \in S_3} (-1)^{\sigma} \ad g_{\sigma(1)} \ad g_{\sigma(2)} \ad g_{\sigma(3)} \in \End_k L,$$ 
    where $g_1, g_2, g_3 \in L$.
    Let $R(L)$ be the associative subalgebra of $\End_k L$ generated by elements of the form $\langle g_1, g_2, g_3 \rangle$.
    In a paper \cite{YuPSt5} in was shown that $R(L)$ is commutative, and for all $g \in L$ the formula:
    $$
    {}^{g}\langle g_1, g_2, g_3\rangle \myeq [\ad g, \langle g_1, g_2, g_3 \rangle] = \langle [g, g_1], g_2, g_3 \rangle + \langle g_1, [g, g_2], g_3 \rangle + \langle g_1, g_2, [g, g_3] \rangle
    $$ 
	defines a derivation on $R(L)$.

	\begin{proposition}[\cite{YuPmonogr}, Proposition~46.3]\label{prop:identities}
		Let $L$ be a Lie algebra and $L$ satisfies the standard Lie identity of degree $5$.
        Then, for all $a, b \in R(L)$ and $f, g \in L$:
        \begin{enumerate}
			\item ${}^g a h = {}^h a g$;
            
            \item ${}^{bg} a = b \left({}^g a\right)$.
		\end{enumerate}
	\end{proposition}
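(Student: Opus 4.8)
The strategy is to reduce both statements to a single identity about a generator $\langle p_1,p_2,p_3\rangle$ of $R(L)$ and then to extract that identity from $\St_5$.

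\emph{Step 1 (reduction to generators).} For $g\in L$ the map $a\mapsto[\ad g,a]$ is a derivation of $\End_k L$ which, by the displayed formula preceding the Proposition, carries each generator $\langle g_1,g_2,g_3\rangle$ into $R(L)$; being a derivation it maps $R(L)$ into $R(L)$ and agrees there with ${}^g$, so ${}^g a=[\ad g,a]$ for all $a\in R(L)$ (in particular ${}^g a\in R(L)$ and ${}^{f_1+f_2}a={}^{f_1}a+{}^{f_2}a$). Using the commutativity of $R(L)$ and the derivation property of ${}^g$ (both from \cite{YuPSt5}), one checks that statement~2 is additive in $a$ and $b$, multiplicative in $a$ (apply the Leibniz rule to ${}^{bg}(a_1a_2)$ and pull $b$ out by commutativity), and multiplicative in $b$ (${}^{(b_1b_2)g}a={}^{b_1(b_2g)}a=b_1({}^{b_2g}a)=b_1b_2\,{}^g a$); hence it suffices to prove statement~2 for $a,b$ generators. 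Granting statement~2, statement~1 follows for all $a\in R(L)$ by induction on the length of $a$ as a word in the generators: for $a=a_1c$ with $c$ a generator, $({}^g a)(h)=({}^g a_1)(ch)+a_1(({}^g c)(h))$, and the inductive hypothesis for $a_1$, statement~2 (to rewrite ${}^{ch}a_1=c\,{}^h a_1$), the base case for $c$, and commutativity of $R(L)$ collapse this to $({}^h a)(g)$. So it is enough to treat generators.

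\emph{Step 2 (both statements from one identity).} I claim both follow from
\[ \ad\bigl(b(g)\bigr)=b\circ\ad g-{}^g b\quad\text{in }\End_k L,\qquad b=\langle p_1,p_2,p_3\rangle,\ g\in L. \tag{$\dagger$} \]
Indeed, for any $a\in R(L)$, ${}^{b(g)}a=[\ad(b(g)),a]=[b\circ\ad g-{}^g b,\,a]=[b\circ\ad g,\,a]=b\circ[\ad g,a]=b\,{}^g a$, using $[{}^g b,a]=0$ and $a\circ b=b\circ a$ (both since ${}^g b,a,b\in R(L)$ and $R(L)$ is commutative); this is statement~2 for $b$ a generator. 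For statement~1 with $a=b$ a generator, evaluating $(\dagger)$ at $h$ gives $({}^g b)(h)=b([g,h])-[b(g),h]=b([g,h])+[h,b(g)]$, and $[h,b(g)]=({}^h b)(g)+b([h,g])$ (from ${}^h b=\ad h\circ b-b\circ\ad h$), so $({}^g b)(h)=({}^h b)(g)$.

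\emph{Step 3 (the identity from $\St_5$ — the main obstacle).} Evaluating $(\dagger)$ at $z\in L$ and using ${}^g b=[\ad g,b]$, one sees that $(\dagger)$ is equivalent to the ``quasi-derivation'' identity
\[ [\,b(g),\,z\,]+[\,g,\,b(z)\,]=2\,b([g,z]),\qquad b=\langle p_1,p_2,p_3\rangle. \tag{$\ddagger$} \]
This is the only step in which $\St_5$ is used essentially (and it presupposes the commutativity of $R(L)$). The plan is to rewrite $\St_5(x_1,x_2,x_3,x_4;z)$ by collecting the outermost adjoint operator, which gives
\[ [x_1,\langle x_2,x_3,x_4\rangle z]-[x_2,\langle x_1,x_3,x_4\rangle z]+[x_3,\langle x_1,x_2,x_4\rangle z]-[x_4,\langle x_1,x_2,x_3\rangle z]=0, \tag{$\star$} \]
and to record the companion relation from collecting the innermost adjoint, $\langle x_1,x_2,x_3\rangle[x_4,z]=\langle x_4,x_2,x_3\rangle[x_1,z]+\langle x_1,x_4,x_3\rangle[x_2,z]+\langle x_1,x_2,x_4\rangle[x_3,z]$. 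Using $(\star)$ with $(x_1,x_2,x_3,x_4)=(p_1,p_2,p_3,g)$ and with $(p_1,p_2,p_3,z)$ rewrites the left-hand side of $(\ddagger)$ as an alternating sum of terms $[\,p_i,\ \langle p_j,p_k,g\rangle z-\langle p_j,p_k,z\rangle g\,]$; substituting and then reducing the resulting depth-four Lie monomials by the Jacobi identity together with the innermost form of $\St_5$ is expected to yield $2\langle p_1,p_2,p_3\rangle[g,z]$. Carrying out this bracket bookkeeping is the hard part of the argument; in substance it is the computation behind the construction recalled above from \cite{YuPSt5}. Once $(\ddagger)$, equivalently $(\dagger)$, is in hand, the Proposition follows from Steps~1 and~2.
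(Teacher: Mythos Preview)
The paper does not prove this proposition at all; it is quoted verbatim from \cite[Proposition~46.3]{YuPmonogr} and used as a black box. So there is no ``paper's own proof'' to compare against, and your write-up already goes further than the paper by attempting a self-contained argument.

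That said, your proposal has a genuine gap. Steps~1 and~2 are correct: the reductions to generators are clean, and the derivation of both statements from the single operator identity $(\dagger)$ (equivalently the quasi-derivation identity $(\ddagger)$) is valid --- I checked that $(\dagger)$ does hold in every $\Diff A$, so it is at least plausible as a consequence of $\St_5$. But Step~3 is not a proof. You write down the outermost expansion $(\star)$ and the innermost companion of $\St_5$, state that substituting and reducing ``is expected to yield'' $2\langle p_1,p_2,p_3\rangle[g,z]$, and then explicitly concede that ``carrying out this bracket bookkeeping is the hard part'' while pointing back to \cite{YuPSt5}. That is exactly the computation that carries the entire weight of the proposition: everything in Steps~1--2 is formal manipulation in $\End_k L$ using only that $R(L)$ is commutative and that ${}^g$ is a derivation, while $(\ddagger)$ is where $\St_5$ must actually be \emph{used}. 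Leaving it as ``expected'' is not different in substance from the paper's bare citation.

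If you want a complete proof, you must close Step~3. One workable route: use $(\star)$ twice, once with $(x_1,\dots,x_4;z)=(p_1,p_2,p_3,g;z)$ to rewrite $[g,b(z)]$ and once with $(p_1,p_2,p_3,z;g)$ to rewrite $[z,b(g)]$; add, and then eliminate the resulting terms $[p_i,\langle p_j,p_k,g\rangle z+\langle p_j,p_k,z\rangle g]$ by the innermost form of $\St_5$ together with the Jacobi identity. The cancellation is real but requires tracking signs carefully across six three-term brackets; until that is written out, the argument is incomplete.
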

    
    The next section contains the proof of the following result.
    
    \begin{theorem}\label{th:lie_to_diff}
		Let $L$ be a prime Lie algebra over a field $k$ of zero characteristic satisfying the standard Lie identity of degree~$5$.
        Then, $L$ can be embedded to the Lie algebra of special derivations of some prime differential algebra.
	\end{theorem}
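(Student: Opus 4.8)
\emph{Proof plan.} The plan is to reduce the statement to the case of a simple Lie algebra, which was settled by Razmyslov in \cite{YuPSt5}, by passing to the central closure of $L$.

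First I would use primeness of $L$ (understood, as in \cite[\S 3]{YuPmonogr}, in the sense of algebras of arbitrary signature) to analyze the centroid $\Gamma$ of $L$: for $0 \ne \gamma \in \Gamma$ both $\gamma(L)$ and $\Ker\gamma$ are ideals of $L$ and $[\gamma(L), \Ker\gamma] = \gamma\bigl([L, \Ker\gamma]\bigr) = 0$, so primeness forces $\Ker\gamma = 0$; thus $\Gamma$ is a domain and $L$ is a torsion-free $\Gamma$-module. Consequently the extended centroid $K$ of $L$ is a field, and the central closure $\widehat L$ of $L$ is a centrally closed prime Lie algebra over $K$ into which $L$ embeds (as a Lie algebra over $k$). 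Since the bracket of $\widehat L$ is $K$-bilinear and $\St_5$ is multilinear, $\widehat L$ again satisfies $\St_5$.

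The crucial point — and the step I expect to be the main obstacle — is to show that $\widehat L$ is simple over $K$. A centrally closed prime algebra need not be simple in general, so one must use $\St_5$ in an essential way; this is exactly where the structure theory recalled in the Preliminaries enters. Using the commutative algebra $R(\widehat L)$ together with the derivations ${}^{g}(-)$ of Proposition~\ref{prop:identities} and the analysis of their interaction with $\widehat L$ carried out in \cite{YuPSt5} (now over the field $K$), one would show that $\widehat L$ has no proper nonzero ideal. (Should $\widehat L$ fail to be simple, the alternative is to carry out Razmyslov's construction directly for the centrally closed prime algebra $\widehat L$, which is a mild extension of the simple case.)

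Granting simplicity, $\widehat L$ is a simple Lie algebra over the field $K$ of characteristic zero satisfying $\St_5$, so Razmyslov's theorem \cite{YuPSt5} (see also \cite[Th.~16]{Poinsot}) provides a differential $K$-algebra $B$ and an embedding $\widehat L \hookrightarrow \Diff B$; inspecting this construction (or, since $\widehat L$ is simple, passing to a suitable differential prime quotient of $B$) one may take $B$ to be prime as a differential $K$-algebra. Then $B$ is also prime as a differential $k$-algebra: if $I \cdot J = 0$ for nonzero differential $k$-ideals $I, J$, then $KI$ and $KJ$ are nonzero differential $K$-ideals with $(KI)(KJ) \subseteq K(IJ) = 0$, contradicting the $K$-primeness of $B$. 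Composing the two embeddings realizes $L$ inside $\Diff B$ for the prime differential $k$-algebra $B$. The semiprime version of the result will then follow by writing a semiprime $L$ as a subdirect product of its prime quotients $L/P_i$, applying the above to each factor, and using $\prod_i \Diff B_i = \Diff\bigl(\prod_i B_i\bigr)$ with $\prod_i B_i$ semiprime.
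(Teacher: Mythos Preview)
Your plan has a genuine gap at exactly the point you flag: you do not prove that the central closure $\widehat L$ is simple over the extended centroid $K$, and your fallback---``carry out Razmyslov's construction directly for the centrally closed prime algebra $\widehat L$, which is a mild extension of the simple case''---is circular, since extending the embedding from simple to prime algebras is precisely the content of Theorem~\ref{th:lie_to_diff}. Passing to $\widehat L$ buys nothing if one still has to handle a prime, possibly non-simple, algebra afterwards. In Razmyslov's argument for the simple case the injectivity of the constructed map $L\to\Diff A$ comes for free, because any nonzero homomorphism out of a simple Lie algebra is injective; for a merely prime $L$ this injectivity is the whole difficulty, and nothing in your outline addresses it beyond a hope that $\widehat L$ is simple.

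The paper does not reduce to the simple case at all. It adjoins a generic element $\mathbf g$ to $L$ (forming the relatively free extension $\overline L$ in the variety generated by $L$) and splits into two cases according to whether the ideal $\overline I$ generated by all ${}^{\mathbf g}\langle x,y,z\rangle$ vanishes. If $\overline I=0$, one shows $R(L)$ is a domain and that $L$ becomes three-dimensional over its fraction field, whence $L\hookrightarrow\mathfrak{sl}_2(K)\subset\Diff K[x]$. If $\overline I\neq 0$, one forms the product $\widetilde L=\prod_{a\in\overline I\setminus\{0\}}\overline L$, localizes at a suitable element $s$, and proves $\widetilde L_s=\Diff\widetilde R_s$; primeness of $L$ is then used directly to show that the natural map $L\to\widetilde L_s$ is injective (a nonzero kernel would yield two nonzero ideals $A,B\subset L$ with $[A,B]=0$). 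A final Zorn's-lemma argument replaces $\widetilde R_s$ by a prime differential quotient without destroying the embedding. Thus the paper's proof confronts the injectivity problem head-on rather than trying to sidestep it via simplicity.
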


	\begin{corollary}\label{cor:prime}
		Let $L$ be a prime Lie algebra over a field $k$ of zero characteristic satisfying the standard Lie identity of degree~$5$.
        Then, $L$ is Wronskian special.
	\end{corollary}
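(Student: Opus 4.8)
The plan is to deduce the corollary formally from Theorem~\ref{th:lie_to_diff} together with the universal property of the Wronskian envelope; no new computation is needed beyond what Theorem~\ref{th:lie_to_diff} already provides. Recall that, by the adjunction of Poinsot (\cite[Th.~9]{Poinsot}, \cite[Prop.~49]{PoinsotAAM}), the functor $A \mapsto \Diff A$ from commutative differential $k$-algebras to Lie algebras admits a left adjoint $W$, and the Wronskian envelope of a Lie algebra $L$ is precisely $\Diff W(L)$, equipped with the unit $\eta_L\colon L \to \Diff W(L)$ of the adjunction; by definition, $L$ is Wronskian special exactly when $\eta_L$ is injective.

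Now let $L$ be a prime Lie algebra over $k$ satisfying $\St_5$. By Theorem~\ref{th:lie_to_diff} there is a prime differential $k$-algebra $A$ together with an embedding $\iota\colon L \hookrightarrow \Diff A$. By the universal property of the unit, $\iota$ factors through $\eta_L$ as
$$
\iota = \Diff(\tilde\iota) \circ \eta_L ,
$$
where $\tilde\iota\colon W(L) \to A$ is the differential-algebra homomorphism corresponding to $\iota$ under the adjunction $\operatorname{Hom}_{\mathrm{Lie}}(L,\Diff A) \cong \operatorname{Hom}_{\mathrm{diff}}(W(L),A)$. Since $\iota$ is injective and factors as above, the first map $\eta_L$ in the composition must itself be injective. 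Hence $L$ is Wronskian special.

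The entire substance of the argument is carried by Theorem~\ref{th:lie_to_diff}: once a faithful representation of $L$ by special derivations of some differential algebra is in hand, Wronskian speciality is a purely categorical consequence of the adjunction, and the primeness of $A$ plays no role at this stage (it is there to feed the later applications). Accordingly, the only real obstacle lies in the next section, in proving Theorem~\ref{th:lie_to_diff}: starting from the commutative associative algebra $R(L)$ with the derivation $g \mapsto {}^{g}(\cdot)$ from Proposition~\ref{prop:identities}, one has to build a differential algebra whose Lie algebra of special derivations receives $L$ injectively, and then show that primeness of $L$ forces the resulting differential algebra to be prime as well.
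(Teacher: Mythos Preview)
Your argument is correct and matches the paper's intended reasoning: the corollary is stated there without proof, as an immediate consequence of Theorem~\ref{th:lie_to_diff} together with the universal property of the Wronskian envelope, which is exactly the factorization $\iota = \Diff(\tilde\iota)\circ\eta_L$ you spell out. The last paragraph is accurate commentary on what Theorem~\ref{th:lie_to_diff} must accomplish, but it is extraneous to the proof of the corollary itself.
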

    
    \begin{corollary}\label{cor:semiprime}
    	Let $L$ be a semiprime Lie algebra over a field $k$ of zero characteristic satisfying the standard Lie identity of degree~$5$.
        Then, $L$ can be embedded to the Lie algebra of special derivations of some semiprime differential algebra.
        
        In particular, $L$ is Wronskian special.
	\end{corollary}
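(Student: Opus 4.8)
The plan is to deduce the statement from the prime case, Theorem~\ref{th:lie_to_diff}, by realizing a semiprime Lie algebra as a subdirect product of prime ones and then checking that $\Diff(-)$ and semiprimeness are compatible with direct products.

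First I would establish that in a semiprime Lie algebra $L$ the intersection of all prime ideals is $0$; this is the Lie-algebra instance of the general subdirect decomposition of a semiprime algebra of a signature (cf.\ \cite[\S3]{YuPmonogr}). Fix $0\neq x\in L$. Let $A_0$ be the ideal generated by $x$; inductively, since $A_n\neq 0$ and $L$ is semiprime we may choose $0\neq x_{n+1}\in[A_n,A_n]$ and let $A_{n+1}$ be the ideal it generates, so $A_0\supseteq A_1\supseteq\cdots$ and $x_n\in A_n$. By Zorn's lemma pick an ideal $P$ maximal among those containing no $x_n$; then $P\neq L$ since $x=x_0\notin P$. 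One checks $P$ is prime: if ideals $B,C$ satisfy $[B,C]\subseteq P$ with $B\not\subseteq P$ and $C\not\subseteq P$, then $B+P$ and $C+P$ properly contain $P$, hence by maximality contain some $x_m$ and $x_n$, so $A_N\subseteq B+P$ and $A_N\subseteq C+P$ for $N=\max(m,n)$; but then $x_{N+1}\in[A_N,A_N]\subseteq[B+P,C+P]\subseteq P$, a contradiction. Since $x$ was an arbitrary nonzero element, the canonical homomorphism $L\to\prod_P L/P$, the product over all prime ideals $P$, is injective.

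Next, each $L/P$ is a prime Lie algebra over $k$ and still satisfies $\St_5$ (identities pass to quotients), so Theorem~\ref{th:lie_to_diff} supplies an embedding $L/P\hookrightarrow\Diff A_P$ with $A_P$ a prime differential $k$-algebra. Let $A=\prod_P A_P$, endowed with the componentwise derivation. Then $A$ is semiprime: a nonzero differential ideal $J\subseteq A$ projects to a nonzero differential ideal $\pi_{P_0}(J)$ of some component $A_{P_0}$, and semiprimeness of $A_{P_0}$ yields $b,c\in J$ with $\pi_{P_0}(bc)\neq 0$, so $J^2\neq 0$. Since the Wronskian bracket on $\Diff A$ is computed componentwise, the map $(a_P)\partial\mapsto(a_P\partial)_P$ is an isomorphism of Lie algebras between $\Diff A$ and $\prod_P\Diff A_P$. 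Composing the embeddings gives $L\hookrightarrow\prod_P L/P\hookrightarrow\prod_P\Diff A_P\cong\Diff A$ with $A$ semiprime, which proves the first assertion. The ``in particular'' statement then follows just as Corollary~\ref{cor:prime} follows from Theorem~\ref{th:lie_to_diff}: by the adjunction of \cite{Poinsot,PoinsotAAM} every Lie homomorphism $L\to\Diff A$ factors through the canonical homomorphism from $L$ to its Wronskian envelope, and a homomorphism through which an injective one factors is itself injective.

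I expect essentially all the difficulty to sit in Theorem~\ref{th:lie_to_diff} itself; in the reduction above the only step that is not purely formal is the subdirect decomposition, and there the one point requiring care is to make the ``$x_n$-avoiding'' ideal $P$ prime with respect to the bracket $[\cdot,\cdot]$ of ideals rather than an associative product — which is precisely why the chain $A_n$ is built from the commutator ideals $[A_n,A_n]$ provided by semiprimeness.
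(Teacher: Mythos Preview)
Your proof is correct and follows the same route as the paper's: decompose the semiprime $L$ as a subdirect product of prime quotients, apply Theorem~\ref{th:lie_to_diff} componentwise, and use $\Diff\bigl(\prod_\alpha A_\alpha\bigr)\cong\prod_\alpha\Diff A_\alpha$ together with the fact that a product of prime differential algebras is semiprime. The only difference is cosmetic: the paper cites Amitsur's general result \cite{Amitsur} for the subdirect decomposition, whereas you supply the standard Zorn's-lemma proof directly.
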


	\begin{proof}
		Due to Theorem~1.1 from \cite{Amitsur}, zero ideal in $L$ can be represented as an intersection~$\bigcap\limits_{\alpha\in\Lambda} I_{\alpha}$ of ideals, where $L / I_\alpha$ is a prime Lie algebra for all $\alpha \in \Lambda$.
        Then, there is an embedding $\pi \colon L \to \prod\limits_{\alpha \in \Lambda} L / I_{\alpha}$.
        Since $L / I_{\alpha}$  satisfies the standard Lie identity of degree~$5$ for all $\alpha \in \Lambda$, Theorem~\ref{th:lie_to_diff} implies that for every $\alpha$ there exists an embedding $i_\alpha \colon L / I_{\alpha} \to \Diff A_{\alpha}$, where $A_{\alpha}$ is a prime differential algebra.
        Taking the composition of these maps, we obtain an embedding 
        $$L \to \prod\limits_{\alpha \in \Lambda} \Diff A_{\alpha} \cong \Diff \left(\prod\limits_{\alpha \in\Lambda} A_{\alpha}\right).$$
        The algebra $\prod\limits_{\alpha \in \Lambda} A_{\alpha}$ is a semiprime differential algebra, so we are done.
	\end{proof}

	The following proposition is an analogue of the implication $4 \Rightarrow 2$ in the Main Theorem of \cite{YuPSt5}.
    
    \begin{proposition}\label{prop:wronsk_special}
    	Let $A$ be a differential algebra over a field $k$ of zero characteristic.
        Then, $\Diff A$ satisfies all identites satisfied by the algebra $\widetilde{W}_1 = \Diff k[[x]]$.

		In particular, if $L$ is a Wronskian special Lie algebra over a field $k$ of zero characteristic, then $L$ satisfies all identites satisfied by the algebra $\widetilde{W}_1 = \Diff k[[x]]$.
	\end{proposition}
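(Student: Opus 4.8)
The plan is to show that $\Diff A$ lies in the variety generated by $\widetilde{W}_1$ by embedding it, up to operations that preserve polynomial identities (passing to subalgebras, quotients and directed unions), into a Lie algebra of the form $K\otimes_k\Diff k[x]$ for a suitable field extension $K/k$. Such an algebra satisfies exactly the $k$-identities of $\Diff k[x]$, and $\Diff k[x]$ in turn has the same identities as $\widetilde{W}_1$: the inclusion $\Diff k[x]\subseteq\Diff k[[x]]$ gives one direction, and for the other it is enough, in characteristic zero, to treat multilinear identities; a multilinear Lie polynomial evaluated on $f_1\partial,\dots,f_n\partial\in\Diff k[[x]]$ depends jointly continuously on the $f_i$ for the $x$-adic topology (multiplication on $k[[x]]$ and the operator $d/dx$ being continuous), and $\Diff k[x]$ is dense in $\Diff k[[x]]$, so any identity of $\Diff k[x]$ already holds on $\Diff k[[x]]$.

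The main point is to replace the given derivation of $A$ by the standard derivation $d/dt$ via the Taylor homomorphism
$$\iota\colon A\longrightarrow A[[t]],\qquad \iota(a)=\sum_{n\ge 0}\frac{a^{(n)}}{n!}\,t^n,$$
where $A[[t]]$ is taken with the derivation $d/dt$. Division by $n!$ uses $\Char k=0$, and matching the Cauchy product against the Leibniz rule shows that $\iota$ is a (clearly injective) homomorphism of differential $k$-algebras; hence it induces an embedding $\Diff A\hookrightarrow\Diff(A[[t]],d/dt)$. In $A[[t]]$ every element of $A$ is a $d/dt$-constant, so the target is $\Diff(C[[t]],d/dt)$ with $C=A$ a commutative $k$-algebra playing the role of constants, and by the density argument once more ($C[t]$ is dense in $C[[t]]$) it suffices to show that $\Diff(C[t],d/dt)$ lies in the variety generated by $\Diff k[x]$.

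Now the coefficient ring can be cut down: a multilinear identity is tested on finitely many polynomials of $C[t]$, whose coefficients generate a finitely generated subalgebra $C_0\subseteq C$, and the test takes place already inside $\Diff(C_0[t],d/dt)$; so we may assume $C=k[s_1,\dots,s_m]/\mathfrak a$ is finitely generated. Then $C[t]$ is the quotient of the polynomial differential algebra $(k[s_1,\dots,s_m,t],d/dt)$ by the differential ideal $\mathfrak a[t]$, so $\Diff(C[t],d/dt)$ is a quotient of $\Diff(k[s_1,\dots,s_m,t],d/dt)$, and we may assume $C=k[s_1,\dots,s_m]$. Finally, embedding $C$ into its fraction field $K=k(s_1,\dots,s_m)$ embeds $\Diff(C[t],d/dt)$ into $\Diff(K[t],d/dt)$; since $K[t]=K\otimes_k k[t]$ with derivation $1\otimes d/dt$, there is a Lie-algebra isomorphism $\Diff(K[t],d/dt)\cong K\otimes_k\Diff k[x]$, which has the same $k$-identities as $\Diff k[x]$, hence as $\widetilde{W}_1$. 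Retracing the chain of subalgebras, quotients and directed unions gives the claim for $\Diff A$. The final assertion follows at once, since a Wronskian special Lie algebra embeds into $\Diff A$ for some differential algebra $A$ and therefore inherits all identities of $\widetilde{W}_1$.

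I expect the two non-formal ingredients to be the Taylor embedding — in particular verifying that it is compatible with both the multiplication and the derivation — and the density/continuity argument reducing $\widetilde{W}_1$ to $\Diff k[x]$ (continuity of multiplication and of $d/dx$ for the $x$-adic topology, together with the reduction to multilinear identities in characteristic zero). Everything else is the standard closure of varieties of Lie algebras under subalgebras, quotients, directed unions and extension of scalars.
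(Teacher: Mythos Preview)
Your argument is correct, and the Taylor map is indeed the key ingredient, but the paper reaches the conclusion by a much shorter route that avoids your chain of reductions. Instead of embedding $A$ into $A[[t]]$ and then cutting the constants down step by step to $k$, the paper works with the \emph{free} differential algebra $B=k\{y_1,\dots,y_n\}$: if an identity $f$ fails on some $a_1\partial,\dots,a_n\partial\in\Diff A$, then the associated differential polynomial $p$ with $f(y_1\partial,\dots,y_n\partial)=p\,\partial$ is nonzero in $B$; since $B$ is an ordinary polynomial ring over $k$, one picks a $k$-point $\varphi\colon B\to k$ with $\varphi(p)\neq 0$ and then the single Taylor homomorphism $\widetilde{\varphi}(b)=\sum_{j\ge 0}\varphi(b^{(j)})x^{j}/j!$ lands directly in $k[[x]]$ with $\widetilde{\varphi}(p)\neq 0$, so $f$ already fails in $\widetilde{W}_1$. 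Thus the paper needs neither the density/continuity reduction from $k[[x]]$ to $k[x]$, nor the finitely-generated and scalar-extension arguments; the freeness of $B$ plays the role that your whole reduction chain plays. Your version has the virtue of exhibiting explicitly, via closure operations of varieties, why $\Diff A$ lies in the variety generated by $\widetilde{W}_1$, while the paper's version is a two-line contrapositive.
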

    
    \begin{proof}
		Assume the contrary.
        Let $f(x_1, \ldots, x_n)$ be a Lie polynomial such that $f(x_1, \ldots, x_n) = 0$ is an identity in $\widetilde{W}_1$, but there exist $a_1, \ldots, a_n \in A$ such that $f(a_1\partial, \ldots, a_n\partial) \neq 0$.
        Consider the algebra of differential polynomials $B = k\{ y_1, \ldots, y_n \}$.
        There exists a differential polynomial $p(y_1, \ldots, y_n)$ such that $f(y_1\partial, \ldots, y_n\partial) = p(y_1, \ldots, y_n)\partial$.
        Since $p(a_1, \ldots, a_n) \neq 0$, $p$ is a nonzero differential polynomial.

		$B$ is isomorphic, as a $k$-algebra, to the polynomial algebra in $y_1, \ldots, y_n$ and their derivatives, so there exists a homomorphism (not necessarily differential) $\varphi\colon B \to k$ such that $\varphi\left( p(y_1, \ldots, y_n) \right) \neq 0$.
        It is easy to verify that over a field of zero characteristic formula
        $$
        \widetilde{\varphi}(a) = \sum\limits_{j = 0}^{\infty} \varphi\left( a^{(n)} \right) \frac{x^j}{j!}
        $$
        defines a homomorphism of differential algebras from $B$ to $k[[x]]$ (for details, see \cite[\S 44.3]{YuPmonogr}).
        
        The homomorphism $\widetilde{\varphi}$ gives rise to a homomorphism of the algebras of special derivations $\widehat{\varphi}\colon \Diff B \to \widetilde{W}_1$.
        Moreover, $\varphi\left( p(y_1, \ldots, y_n) \right) \neq 0$ implies that $\widetilde{\varphi}\left( p(y_1, \ldots, y_n) \right) \neq 0$.
        Then
        $$
        f\left( \widehat{\varphi}(y_1), \ldots, \widehat{\varphi}(y_1) \right) = \widehat{\varphi}\left( f(y_1\partial, \ldots, y_n\partial) \right) = \widehat{\varphi}\left( p(y_1, \ldots, y_n)\partial \right) = \widetilde{\varphi} \left( p(y_1, \ldots, y_n)\right) \partial \neq 0
        $$
        This, $f(x_1, \ldots, x_n)$ is not an identity in $\widetilde{W}_1$, so we arrived at a contradiction.
	\end{proof}
    
    \begin{remark}
		It is possible to remove the assumtion of zero characteristic from Proposition~\ref{prop:wronsk_special} by using the algebra of divided power series instead of $k[[x]]$, see \cite[\S 44.3]{YuPmonogr} for details.
	\end{remark}
        
    \begin{corollary}\label{cor:identities}
		Let $L$ be a semiprime Lie algebra over a field $k$ of zero characteristic satisfying the standard Lie identity of degree~$5$.
        Then, $L$ satisfies all identities satisfied by the algebra $\widetilde{W}_1 = \Diff k[[x]]$.
	\end{corollary}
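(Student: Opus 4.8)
The plan is to chain together the two results already established in this section, so that the corollary becomes essentially immediate. First I would invoke Corollary~\ref{cor:semiprime}: since $L$ is semiprime over a field $k$ of characteristic zero and satisfies $\St_5$, that corollary provides an embedding $L \hookrightarrow \Diff A$ for some differential $k$-algebra $A$ (in fact a semiprime one, though semiprimeness of $A$ plays no role here). Next I would apply Proposition~\ref{prop:wronsk_special}, which asserts that $\Diff A$ satisfies every identity satisfied by $\widetilde{W}_1 = \Diff k[[x]]$. Since the class of identities holding in an algebra is inherited by all of its subalgebras, and $L$ is isomorphic to a subalgebra of $\Diff A$, it follows that $L$ satisfies all identities of $\widetilde{W}_1$.

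An equivalent and slightly more economical phrasing goes through the notion of Wronskian speciality directly: Corollary~\ref{cor:semiprime} already records that $L$ is Wronskian special, and the ``in particular'' clause of Proposition~\ref{prop:wronsk_special} then yields the conclusion at once. Either route is purely formal once Theorem~\ref{th:lie_to_diff} (together with the Amitsur subdirect decomposition used in the proof of Corollary~\ref{cor:semiprime}) and Proposition~\ref{prop:wronsk_special} are in hand.

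Accordingly, I do not expect any genuine obstacle at this step: all of the mathematical substance has been absorbed into Theorem~\ref{th:lie_to_diff} on the one hand and into the specialization argument of Proposition~\ref{prop:wronsk_special} on the other. The one point deserving a sentence of attention is the hypothesis that $\Char k = 0$, which is used both in Corollary~\ref{cor:semiprime} (via Theorem~\ref{th:lie_to_diff}) and in Proposition~\ref{prop:wronsk_special} (to build the differential homomorphism $\widetilde{\varphi}$ into $k[[x]]$); by the Remark following that proposition this second use can be bypassed by working with divided power series instead of $k[[x]]$, but I would keep the characteristic-zero assumption here to stay consistent with the statement.
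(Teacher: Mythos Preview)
Your proposal is correct and matches the paper's intent: the paper states the corollary without proof, treating it as an immediate consequence of Corollary~\ref{cor:semiprime} together with Proposition~\ref{prop:wronsk_special}, exactly as you describe. Your additional remarks on the role of the characteristic-zero hypothesis are accurate and consistent with the paper's own Remark following Proposition~\ref{prop:wronsk_special}.
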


    \begin{remark}
		The Lie algebra $\Diff k[[x]]$ is a completion of the Lie algebra $\Diff k[x]$ of the polynomial vector fields on the affine line, so they satisfy the same identities.
        Thus, we proved that for any semiprime Lie algebra over a field of zero characteristic the standard identity $St_5$ implies all other identities of the Lie algebra of polynomial vector fields on the affine line.
	\end{remark}


\section{Proof of Theorem~\ref{th:lie_to_diff}}

 	First of all, we embed $L$ to the algebra of special derivations of some (not necessarily prime) differential algebra.
    Let $\overline{L}$ be the free algebra in a variety generated by $L$ with one free generator $\mathbf{g}$ over $L$ (see \cite[\S 10]{burris}).
    More precisely, let $L\langle \mathbf{g} \rangle$ be the free Lie algebra of rank one generated by $\mathbf{g}$ over $L$.
    By $T$ denote the ideal
    $$
    T = \{ F(\mathbf{g}) \in L\langle \mathbf{g} \rangle \mid F(a) = 0 \mbox{ for all } a \in L \}
    $$
    Then, $\overline{L} = L\langle \mathbf{g} \rangle / T$.
    Theorem 10.12 from \cite{burris} implies that $\overline{L}$ satisfies all identites of $L$.
       
    By $\overline{I}$ denote the ideal in $\overline{L}$ generated by elements of the form ${}^{\mathbf{g}} \langle x, y, z \rangle$ for all $x, y, z \in L$,
    and by $I$ denote the ideal in $L$ generated by elements of the form ${}^{g} \langle x, y, z \rangle$ for all $x, y, z, g \in L$.
   
    \underline{Assume that $\overline{I} = 0$}. Then, $L$ satisfies the identity ${}^{g} \langle x, y, z \rangle = 0$.
    If $R(L) = 0 $, then the identity $\langle x, y, z\rangle = 0$ holds in $L$.
    Due to Lemma~39.4 from \cite{YuPmonogr} in this case $L$ is metabelian, i.e. $\left[ [L, L], [L, L]\right] = 0$.
    Then, $L$ is not prime, so $R(L) \neq 0$.
    Moreover, there are no zero divisors in $R(L)$.
    Indeed, let $a, b \in R(L)$ be nonzero elements such that $ab = 0$.
    For arbitrary $f, g \in L$ we have $[f, ag] = {}^fa + a[f, g] = a[f, g]$, so $aL$ and $bL$ are nonzero Lie ideals in $L$.
    Let $ag \in aL$ and $bf \in bL$, then
    $$
    [ag, bf] = b[ag, f] = ab[f, g] = 0
    $$
    So, $\left[ aL, bL \right] = 0$, and $L$ is not prime.
    Hence, $R(L)$ is an integral domain.
    By $Q$ denote the field of fractions of $R(L)$.
    Fix $g_1, g_2, g_3 \in L$ such that $\langle g_1, g_2, g_3 \rangle \neq 0$.
    Then, the identity 
    $$g = \frac{\langle g_1, g_2, g_3 \rangle}{\langle g_1, g_2, g_3 \rangle} g = \frac{1}{\langle g_1, g_2, g_3 \rangle} \left( \langle g, g_2, g_3 \rangle g_1 + \langle g_1, g, g_3 \rangle g_2 + \langle g_1, g_2, g\rangle g_3 \right)$$ 
    implies that $QL$ is a three-dimensional Lie algebra over $Q$.
    Let $K$ be the algebraic closure of $Q$.
    Over the algebraically closed field $K$ any three-dimensional nonsolvable Lie algebra is isomorphic to $\sL_2(K)$ (see \cite[\S 1.4]{jacobson}).
    Thus, there is an embedding of $L$ to $\sL_2(K)$.
    The algebra $\sL_2(K)$ is isomorphic to the subalgebra $\langle \partial, x \partial, x^2\partial \rangle \subset K[x]\partial$ of the Lie algebra of special derivations of $K[x]$.
    Hence, we constructed an embedding  $L \to \Diff K[x]$.
   
   	\underline{Thus, assume $\overline{I} \neq 0$.}
    By $\overline{I}^{\ast}$ denote $\overline{I} \backslash 0$.
    Consider the Lie algebra $$\widetilde{L} = \prod\limits_{a \in \overline{I}^{\ast}} \overline{L}.$$
    Lie algebra $\widetilde{L}$ is a left module over the algebra $\widetilde{R} = \prod\limits_{a \in \overline{I}^{\ast}} R(\overline{L})$.
    By $s$ denote the element of $\widetilde{R}$ such that the coordinate of $s$ corresponding to $a \in \overline{I}^{\ast}$ equals $a$ for all $a \in \overline{I}^{\ast}$.
    Consider the localization of $\widetilde{R}$ and $\widetilde{R}$-module $\widetilde{L}$ with respect to $s$.
    The localization $\widetilde{L}_s$ has a natural Lie algebra structure defined by the formula ($f, g \in \widetilde{L}$):
    $$
    \left[ \frac{f}{s^m}, \frac{g}{s^k} \right] = -k\frac{ {}^fs }{ s^{m + k + 1} }g + m\frac{ {}^gs }{ s^{m + k + 1} }f + \frac{1}{s^{m + k}}[f, g]
    $$
   
    \begin{lemma}\label{lem:spec_diff}
		Consider $\widetilde{R}_s$ as a differential algebra with respect to component-wise action of $\ad {\bf g}$.
        Then, $\Diff \widetilde{R}_s = \widetilde{L}_s$.
	\end{lemma}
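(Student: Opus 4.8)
The plan is to exhibit an explicit isomorphism $\psi\colon \Diff \widetilde{R}_s \to \widetilde{L}_s$ and to verify by hand that it is a well-defined bijective homomorphism of Lie algebras. Write $\mathbf{g}$ also for the element of $\widetilde{L}$ all of whose coordinates equal $\mathbf{g}$, and recall that the derivation of $\widetilde{R}_s$ is the unique extension of the coordinate-wise operator $a \mapsto {}^{\mathbf{g}}a$ on $R(\overline{L})$, which is a derivation by the facts recalled before Proposition~\ref{prop:identities}. Set
$$
\psi\!\left( \frac{r}{s^n}\,\partial \right) = \frac{r\mathbf{g}}{s^n}, \qquad r \in \widetilde{R},\ n \geq 0 .
$$
Well-definedness is immediate from the coordinate-wise $\widetilde{R}$-module structure on $\widetilde{L}$: if $s^k(s^{n'}r - s^n r') = 0$ in $\widetilde{R}$, then applying this element to $\mathbf{g}$ gives $s^k\bigl(s^{n'}(r\mathbf{g}) - s^n(r'\mathbf{g})\bigr) = 0$ in $\widetilde{L}$. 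Linearity is equally routine.

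Next I would check that $\psi$ respects brackets. The key auxiliary identity is the coordinate-wise formula $[\,r\mathbf{g}, r'\mathbf{g}\,] = \bigl(r\cdot{}^{\mathbf{g}}r' - {}^{\mathbf{g}}r\cdot r'\bigr)\mathbf{g}$ for $r, r' \in \widetilde{R}$, which follows from $[f, ag] = {}^{f}a\,g + a[f,g]$, from ${}^{r\mathbf{g}}r' = r\,({}^{\mathbf{g}}r')$ (Proposition~\ref{prop:identities}(2)), and from $[\mathbf{g}, \mathbf{g}] = 0$. Substituting $f = r\mathbf{g}$, $g = r'\mathbf{g}$ into the formula defining the bracket of $\widetilde{L}_s$, using in addition ${}^{r\mathbf{g}}s = r\,({}^{\mathbf{g}}s)$ (Proposition~\ref{prop:identities}(2) again) and the commutativity of $\widetilde{R}$, a short computation with the quotient rule for the derivation shows that the result equals $\psi$ applied to the bracket $\Bigl( \tfrac{r}{s^m}\cdot {}^{\mathbf{g}}\bigl(\tfrac{r'}{s^k}\bigr) - {}^{\mathbf{g}}\bigl(\tfrac{r}{s^m}\bigr)\cdot\tfrac{r'}{s^k} \Bigr)\partial$ of $\tfrac{r}{s^m}\partial$ and $\tfrac{r'}{s^k}\partial$ in $\Diff \widetilde{R}_s$. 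Hence $\psi$ is a homomorphism of Lie algebras.

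The conceptual core, and the step I expect to require the most care, is the claim that $s\widetilde{L} \subseteq \widetilde{R}\mathbf{g}$; concretely, that $a \cdot v \in R(\overline{L})\mathbf{g}$ for every coordinate index $a \in \overline{I}^{\ast}$ and every $v \in \overline{L}$. Since $a$ belongs to the ideal generated by the elements ${}^{\mathbf{g}}\langle x, y, z\rangle$ with $x, y, z \in L$ and $R(\overline{L})\mathbf{g}$ is stable under multiplication by $R(\overline{L})$, it suffices to show ${}^{\mathbf{g}}\langle x, y, z\rangle \cdot v \in R(\overline{L})\mathbf{g}$; and this is precisely Proposition~\ref{prop:identities}(1) applied to the element $\langle x, y, z\rangle \in R(\overline{L})$: it gives ${}^{\mathbf{g}}\langle x, y, z\rangle \cdot v = {}^{v}\langle x, y, z\rangle \cdot \mathbf{g}$, and ${}^{v}\langle x, y, z\rangle \in R(\overline{L})$. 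This is exactly where the choice of $\overline{I}$ as the ideal of ``derivatives'' of the $\langle\cdot,\cdot,\cdot\rangle$'s enters: it forces $\mathbf{g}$ to become a module generator over $\widetilde{R}_s$ after inverting $s$, with the power of $s$ needed being just $1$.

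Granting the claim, bijectivity follows. For surjectivity, given $\tfrac{f}{s^m} \in \widetilde{L}_s$ the element $sf \in \widetilde{L}$ has all coordinates in $R(\overline{L})\mathbf{g}$, so $sf = b\mathbf{g}$ for some $b \in \widetilde{R}$, whence $\tfrac{f}{s^m} = \tfrac{b\mathbf{g}}{s^{m+1}} = \psi\bigl( \tfrac{b}{s^{m+1}}\partial \bigr)$. For injectivity, if $\psi\bigl( \tfrac{r}{s^n}\partial \bigr) = 0$ then $(s^N r)\mathbf{g} = 0$ in $\widetilde{L}$ for some $N$; applying the claim in each coordinate $a$ (write $a v = b_a\mathbf{g}$) gives $(s^{N+1}r)_a \cdot v = b_a\cdot\bigl((s^N r)_a \mathbf{g}\bigr) = 0$ for all $v \in \overline{L}$, so $s^{N+1}r$ annihilates $\overline{L}$ in every coordinate and therefore vanishes in $\widetilde{R} \subseteq \prod_{a}\End_k \overline{L}$; hence $\tfrac{r}{s^n}\partial = 0$. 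Thus $\psi$ is an isomorphism, which is the asserted identification $\Diff \widetilde{R}_s = \widetilde{L}_s$.
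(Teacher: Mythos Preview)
Your argument is correct and follows essentially the same route as the paper: identify $\Diff\widetilde{R}_s$ with $\widetilde{R}_s\,\mathbf{g}$ inside $\widetilde{L}_s$ and use Proposition~\ref{prop:identities}(1) coordinate-wise to rewrite $a\cdot v$ as ${}^{v}\langle\,\cdot\,,\,\cdot\,,\,\cdot\,\rangle\,\mathbf{g}$, giving the key inclusion $s\widetilde{L}\subseteq\widetilde{R}\mathbf{g}$. The paper simply declares the inclusion $\Diff\widetilde{R}_s\subset\widetilde{L}_s$ ``obvious'' and writes out only this surjectivity step, whereas you additionally verify well-definedness, the homomorphism property, and injectivity; your extra care is warranted but the underlying idea is the same.
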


	\begin{proof}
		The inclusion $\Diff \widetilde{R}_s \subset \widetilde{L}_s$ is obvious.
        Let $g \in \widetilde{L}_s$, and by $g_a$ we denote the coordinate of $g$ corresponding to an element $a = \sum c_i {}^{\bf g}\langle x_i, y_i, z_i \rangle \in \overline{I}^{\ast}$.
        Then (the last equality is due to the first statement of Proposition~\ref{prop:identities}):
        $$
        g_a = a^{-1} a g_a = a^{-1} \sum c_i {}^{\bf g}\langle x_i, y_i, z_i \rangle g_a = a^{-1} \sum c_i {}^{g_a}\langle x_i, y_i, z_i \rangle {\bf g}
        $$
        By $b$ denote the element of $\tilde{R}_s$ such that $b_a = \sum c_i {}^{g_a}\langle x_i, y_i, z_i \rangle$.
        Then, $g = s^{-1} b {\bf g}$, so $g \in \Diff \widetilde{R}_s$.
	\end{proof}
	
    Let us return to the proof of Theorem~\ref{th:lie_to_diff}.
    There is a natural injection from $L$ to $\widetilde{L}$ sending an element $g \in L$ to an element $\tilde{g} \in \widetilde{L}$ whose coordinates all are equal to $g$.
    Composing the injection with the localization map we obtain the homomorphism $\varphi\colon L \to \widetilde{L}_s = \Diff \widetilde{R}_s$.
    If $\Ker\varphi = 0$, then Lemma \ref{lem:spec_diff} gives us a desired injection to the Lie algebra of special derivations.
    
    Assume, on the contrary, that $\Ker\varphi \neq 0$.
    $\Ker\varphi$ consists of $f \in L$ such that there exists $n$ such that for all $a \in\overline{I}$ equality $a^n f = 0$ holds.
    Equivalently, for all $a \in I$ equality $a^n f = 0$ holds.
    Since $k$ is a field of zero characteristic, we can linearize $a^n f = 0$ with respect to $a$ (see \cite[Th. 1.3.8]{zaicev}).
    Then, we obtain that for all $a_1, \ldots, a_n \in I$ equality $a_1\ldots a_n f = 0$ holds.
    Let $f$ be a nonzero element of $\Ker\varphi$.
    Let $n$ be the minimal natural number such that for all $a_1, \ldots, a_n \in I$ equality $a_1\ldots a_n f = 0$ holds.
    Then, there exist $a_2, \ldots, a_n \in I$ such that $a_2\ldots a_n f \neq 0$.
    Hence, $a(a_2\ldots a_n)f = 0$ for all $a \in I$.
    Thus, the set $A = \{f \in L \mbox{ such that }If = 0\}$ is not equal to $\{ 0\}$.
    
    \begin{lemma}
	\begin{enumerate}
		\item $[\ad g, I] \subset I$ for all $g \in L$;
        \item $A$ is an ideal;
        \item $B = IL$ is an ideal.
	\end{enumerate}
	\end{lemma}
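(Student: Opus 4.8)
The plan is to reduce all three parts to the single relation
$$
[g, a v] = ({}^g a)\, v + a\,[g, v], \qquad g, v \in L,\ a \in R(L),
$$
which is nothing but the definition of the operator ${}^g a = [\ad g, a] \in \End_k L$ written out and evaluated at $v$ (it is the identity underlying the computation $[f, a g] = ({}^f a)\, g + a[f, g]$ used earlier). Throughout I use that ${}^g$ is a $k$-linear derivation of the commutative ring $R(L)$, as recorded in the Preliminaries; in particular ${}^g a \in R(L)$ whenever $a \in R(L)$.

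For part (1), note that $[\ad g, I]$ is exactly the set of elements ${}^g a$ with $a \in I$, so the claim is that $I$ is stable under each derivation ${}^g$. Since ${}^g$ is a derivation of $R(L)$ and $I$ is, by definition, the ideal of $R(L)$ generated by the elements ${}^h\langle x, y, z\rangle$ with $h, x, y, z \in L$, it suffices, by the Leibniz rule, to check that ${}^g$ sends each such generator into $I$. This is immediate from the defining formula for ${}^h$:
$$
{}^g\bigl({}^h\langle x, y, z\rangle\bigr) = {}^g\langle [h, x], y, z\rangle + {}^g\langle x, [h, y], z\rangle + {}^g\langle x, y, [h, z]\rangle ,
$$
since each term on the right is again one of the distinguished generators of $I$.

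Parts (2) and (3) are then short deductions from part (1) and the displayed relation. Both $A = \{f \in L : I f = 0\} = \bigcap_{a\in I}\Ker a$ and $B = I L$ are plainly $k$-subspaces of $L$, so in each case it suffices to verify closure under $[g, \cdot]$ for $g \in L$. If $f \in A$ and $a \in I$, then $a f = 0$ and, by part (1), $({}^g a)\, f = 0$, so the relation gives $a\,[g, f] = [g, a f] - ({}^g a)\, f = 0$; since $a \in I$ was arbitrary, $[g, f] \in A$. For $B$, take a spanning element $a v$ with $a \in I$ and $v \in L$; then $[g, a v] = ({}^g a)\, v + a\,[g, v]$, and both summands lie in $I L = B$ because ${}^g a \in I$ by part (1) and $[g, v] \in L$. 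Extending by linearity gives $[g, B] \subseteq B$.

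I do not expect a genuine obstacle: once part (1) is in place, everything is a direct unwinding of definitions. The only point requiring a moment's care is that $I$ is an ideal of $R(L)$ rather than of $L$, and that ${}^g$ preserves $R(L)$, so that the Leibniz rule can be applied with all coefficients staying inside $R(L)$. The substantive content of the lemma is part (1) — that $I$ is a differential ideal of $R(L)$ — from which the ideal property of $A$ and of $B$ follows essentially formally.
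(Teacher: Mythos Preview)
Your proof is correct and follows essentially the same approach as the paper. The only cosmetic difference is in part~(1): the paper expands $[\ad g, a]$ directly for a generic $a = \sum c_i\,{}^{g_i}\langle x_i, y_i, z_i\rangle$ (producing, in addition to your three terms, the summands ${}^g c_i \cdot {}^{g_i}\langle x_i, y_i, z_i\rangle$ and $c_i\,{}^{[g, g_i]}\langle x_i, y_i, z_i\rangle$), whereas you first invoke the Leibniz rule to reduce to generators and then expand ${}^h\langle x, y, z\rangle$ before applying ${}^g$; the two computations are equivalent, and parts~(2) and~(3) match the paper almost verbatim.
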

    
    \begin{proof}
	\begin{enumerate}
		\item An element $a \in I$ can be represented as $\sum c_i {}^{g_i}\langle x_i, y_i, z_i \rangle$, where $c_i \in R(L)$ and $x_i, y_i, z_i, g_i \in L$.
        Then:
        \begin{multline*}
        [\ad g, a] = \sum ( [g, c_i] {}^{g_i}\langle x_i, y_i, z_i\rangle + c_i {}^{[g, g_i]}\langle x_i, y_i, z_i \rangle + \\ + c_i {}^{g_i}\langle [g, x_i], y_i, z_i\rangle  + c_i {}^{g_i}\langle x_i, [g, y_i], z_i\rangle + {}^{g_i}\langle x_i, y_i, [g, z_i]\rangle )
        \end{multline*}
        
        \item Let $g\in L$, $f \in A$, $a \in I$.
        Then 
        $$0 = [g, af] = [\ad g, a]f + a[f, g]$$
        Since $[\ad g, a] \in I$, the first summand equals zero.
        Then, the second equals zero, too.
        
        \item Let $f, g \in L$, $a \in I$.
        Then, $[g, af] = [\ad g, a]f + a[f, g]$.
        Since both $[\ad g, a]f$ and $a[f, g]$ lie in $B$, then $[g, af] \in B$.
	\end{enumerate}
	\end{proof}
    
    We claim that $[A, B] = 0$.
    Indeed, let $g \in L$, $f \in A$ and $a \in I$
    $$[f, ag] = {}^f a g + a[f, g] = {}^g a f + a[f, g] = 0$$
    But we assumed that both ideals are non zero, so we arrived at a contradiction.
        
    Thus, we obtained the injection $\varphi\colon L \to \Diff C$, where $C = \widetilde{R}_s$.
    For a subset $X \subset L$, we denote by $\mu(X)$ the set $\{a \in C | a\partial \in \varphi(X)\}$.
    Denote by $\mathfrak{N}$ the set of differential ideals $J \subset C$ such that $J \cap \mu(L) = 0$.
    $\mathfrak{N}$ is a poset with respect to inclusion and satisfies conditions of Zorn's lemma.
    Let $J$ be a maximal element of $\mathfrak{N}$.
    It follows from the definition of $\mathfrak{N}$ that the composition $L \to \Diff C \to \Diff (C / J)$ is an injection.

	It suffices to show that $C / J$ is prime.
    Let $J_1, J_2$ be ideals in $C / J$ such that $J_1J_2 = 0$.
    Then, $0 = [J_1\partial, J_2\partial] \supset [L \cap J_1\partial, L\cap J_2\partial]$.
    Since $L$ is prime, either $\mu(L) \cap J_1 = 0$ or $\mu(L) \cap J_2 = 0$.
    Maximality of $J$ implies that either $J_1 = 0$ or $J_2 = 0$.
    Thus, $C / J$ is prime.


\section{Primeness of the Lie algebra of special derivations of a prime differential algebra}

	In the previous section we constructed the embedding of a prime Lie algebra to the Lie algebra of special derivations of some prime differential algebra.
    It is natural to ask whether the Lie algebra of special derivations of a prime differential algebra is a prime Lie algebra.
    The question was extensively studied in literature (see \cite{Jordan1,DeltaPrimeNot2,DeltaPrime2} and references there), and the answer is affirmative in most cases.
    However, all mentioned results are applicable only to differential algebras with identity.
    But we can construct the Lie algebra of special derivations from a prime differential algebra without identity, so it is interesting to extend the results mentioned above to the non-unitary case.

	The main result of this section is the following theorem.

    \begin{theorem}\label{th:diff_to_lie}
		Let $A$ be a prime differential algebra over a field $k$ of characteristic zero.
        Then, $\Diff A$ is prime.
	\end{theorem}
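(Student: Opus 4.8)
The plan is to argue by contradiction. Suppose $\mathcal{I}, \mathcal{J} \subset \Diff A$ are nonzero ideals with $[\mathcal{I}, \mathcal{J}] = 0$. Identifying $\Diff A$ with $A$ as a $k$-vector space via $a\partial \mapsto a$, the ideals $\mathcal{I}, \mathcal{J}$ correspond to nonzero subspaces $I, J \subseteq A$ such that $ax' - a'x \in I$ for all $a \in I$, $x \in A$ (and likewise for $J$), and such that $ab' = a'b$ for all $a \in I$, $b \in J$. Throughout I would use the following consequence of (semi)primeness: $\operatorname{Ann}(A) := \{x \in A \mid xA = 0\}$ is a differential ideal of $A$ — if $xA = 0$ then $x'c = (xc)' - xc' = 0$ for every $c$ — with $\operatorname{Ann}(A)^2 \subseteq \operatorname{Ann}(A)\cdot A = 0$, hence $\operatorname{Ann}(A) = 0$. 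We may also assume the derivation of $A$ is nonzero, since otherwise $\Diff A$ carries the zero bracket and the situation is degenerate.

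The first step is to squeeze polynomial identities in $A$ out of the Lie-ideal relations. Fix $a \in I$, $b \in J$. Since $ac' - a'c \in I$ for every $c \in A$, expanding $[(ac' - a'c)\partial,\, b\partial] = 0$ gives
$$
(ab)\,c'' - (ab')\,c' - (a''b - a'b')\,c = 0 \quad (c \in A).
$$
Substituting $c = c_1 c_2$ and re-applying this identity to $c_1$ and to $c_2$ yields $(a''b - a'b')\,c_1 c_2 + 2(ab)\,c_1' c_2' = 0$; substituting $c_1 \mapsto c_1 c_3$ in the latter and re-applying it gives $(ab)\,c_1 c_2' c_3' = 0$ for all $c_1, c_2, c_3 \in A$. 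Differentiating this last identity and invoking it again on each resulting summand shows $(ab)^{(n)}\,c_1 c_2' c_3' = 0$ for every $n$, so the differential ideal $[ab]$ annihilates $N := \operatorname{span}_k\{c_1 c_2' c_3' \mid c_i \in A\}$, which is itself a differential ideal of $A$. If $N = 0$ then $N_0 := \operatorname{span}_k\{c' \mid c \in A\}$ (a differential ideal of $A$) satisfies $N_0^2 \cdot A = 0$, hence $N_0^2 \subseteq \operatorname{Ann}(A) = 0$, so $N_0 = 0$ by semiprimeness, contradicting that the derivation is nonzero. Thus $N \neq 0$, and primeness of $A$ forces $[ab] = 0$, i.e. $ab = 0$ — and therefore also $ab' = a'b = 0$ (differentiate $ab = 0$ and use $ab' = a'b$). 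Hence $IJ = 0$.

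The second step is to promote $IJ = 0$ to a contradiction phrased in terms of honest differential ideals of $A$. I would prove by induction that $ab^{(n)} = 0$ for all $n \geq 0$, $a \in I$, $b \in J$: the cases $n = 0, 1$ are just shown, and assuming it for all $j < n$ we apply it to $bc' - b'c \in J$ — in the Leibniz expansion of $a\,(bc' - b'c)^{(n-1)}$ every monomial is a scalar multiple of $(ab^{(j)})\,c^{(n-j)}$ with $0 \leq j \leq n$, the terms with $j \leq n-1$ vanish by the inductive hypothesis, and the $j = n$ term equals $-(ab^{(n)})\,c$, so $(ab^{(n)})\,c = 0$ for all $c$ and hence $ab^{(n)} \in \operatorname{Ann}(A) = 0$. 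Consequently $I$ annihilates the differential ideal $[J]$ generated by $J$, so $I \subseteq K := \{x \in A \mid x[J] = 0\}$, and $K$ is a differential ideal of $A$ (from $xb = 0$ with $b \in [J]$ we get $x'b = -xb' = 0$). Then $K$ and $[J]$ are nonzero differential ideals of $A$ with $K\cdot[J] = 0$, contradicting primeness of $A$; this finishes the proof.

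The step I expect to be the main obstacle is the first one: choosing the right substitutions so that the Lie-ideal relation collapses to the single identity $(ab)\,c_1 c_2' c_3' = 0$, and recognizing that this is exactly enough to pin the differential ideal $[ab]$ against a nonzero differential ideal of $A$. In the classical unital treatments one exploits $[\partial, a\partial] = a'\partial$ to make Lie ideals stable under differentiation and then argues directly with differential ideals; here $\Diff A$ contains no such element $\partial$, so all the structure must be extracted from the relations $ac' - a'c \in I$ together with the vanishing of the bracket, and the collapse of $\operatorname{Ann}(A)$ forced by (semi)primeness is precisely what converts the various "almost-zero" products that arise into genuine zeros.
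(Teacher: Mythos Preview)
Your argument is correct and complete, with one small slip: $N_0 = A'$ is closed under the derivation but is \emph{not} in general an ideal of $A$ (for $d\in A$ one has $dc' = (dc)' - d'c$, and $d'c$ need not lie in $A'$). The fix is immediate: work instead with the differential ideal $M = AA' + A'$ generated by $A'$; since $N=0$ gives $(A')^2\subseteq\operatorname{Ann}(A)=0$, every product in $M^2$ contains a factor from $(A')^2$ and so $M^2=0$, whence $M=0$ and $A'=0$ by semiprimeness. With that adjustment all your computations check: the identity $(ab)c''-(ab')c'-(a''b-a'b')c=0$, the collapse to $(ab)\,c_1c_2'c_3'=0$, the stability of $N=\operatorname{span}\{c_1c_2'c_3'\}$ under $D$ and under multiplication by $A$, and the induction giving $ab^{(n)}=0$ are all correct as written. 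Your remark that the case $D=0$ is degenerate is also apt; the theorem tacitly needs a nonzero derivation, and the paper's own proof inherits the same hypothesis through its auxiliary lemma.

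Your route is genuinely different from the paper's. The paper does not compute with the Lie-ideal relations at all: it first proves an independent lemma that a prime differential $k$-algebra with nonzero derivation satisfies no nonzero differential-polynomial identity, then adjoins a unit to $A$, invokes the known Chebotar--Lee theorem that $\Diff A_{\mathrm{id}}$ is prime to produce a Lie polynomial $L$ with $L(a\partial,b\partial,\partial)\neq 0$, and finally uses the ``no identities'' lemma to replace $\partial$ by some $c\partial$ with $c\in A$. What that buys is modularity---the no-identities lemma is of independent interest and the reduction to the unital case is transparent---at the cost of relying on an external primeness result. What your approach buys is self-containment: you never leave $A$, never adjoin a unit, and never cite Chebotar--Lee, extracting everything from the single relation $ab'=a'b$ and the Lie-ideal closure $ac'-a'c\in I$. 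The key creative step you correctly identified---forcing the second-order identity down to $(ab)c_1c_2'c_3'=0$ and recognizing $N$ as a nonzero differential ideal---is exactly the substitute for the paper's no-identities machinery.
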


	\begin{proof}
	We start with two auxiliary results that can be of independent interest.

	\begin{proposition}\label{prop:prime_characterization}
		A differential algebra $A$ is prime (resp. semiprime) if and only if for all $a, b \in A$ (resp. for all $a \in A$) there exists a nonegative integer $n$ such that $a^{(n)} b \neq 0$ (resp. $a^{(n)}a \neq 0$).
	\end{proposition}
    
    \begin{proof}
		Assume that for all $a, b \in A$ (resp. for all $a \in A$) there exists a nonegative integer $n$ such that $a^{(n)} b \neq 0$ (resp. $a^{(n)}a \neq 0$).
        Let $I, J \subset A$ be nonzero differential ideals (resp. $I \subset A$ is a nonzero differential ideal).
        Pick two nonzero element $a \in I$ and $b \in J$ (resp. $a, b \in I$).
        There exists $n$ such that $a^{(n)}b \neq 0$ (resp. $a^{(n)} a \neq 0$).
        Since $a^{(n)} \in I$, $a^{(n)}b \in IJ$ (resp. $a^{(n)}a \in I^2$).
        
        Now, let $A$ be a prime (resp. semiprime) differential algebra. 
        Since the product of nonzero principial ideals generated by $a$ and $b$ (resp. square of the principal ideal generated by $a$) is nonzero, there exist nonegative integers $i$ and $j$ such that $a^{(i)} b^{(j)} \neq 0$ (resp. $a^{(i)} a^{(j)} \neq 0$).
        Let sum $i + j$ be the minimal possible.
        Choose the minimal $j$ among all such pairs with fixed $i + j$.
        
        Assume that $j > 0$.
        Then, $a^{(i)} b^{(j - 1)} = 0$ (resp. $a^{(i)} a^{(j - 1)} = 0$).
        Hence, 
        $$\left( a^{(i)} b^{(j - 1)}\right)^{\prime} = a^{(i + 1)}b^{(j - 1)} + a^{(i)}b^{(j)} = 0 $$
        $$\left(\mbox{resp. } \left( a^{(i)} a^{(j - 1)}\right)^{\prime} = a^{(i + 1)}a^{(j - 1)} + a^{(i)}a^{(j)} = 0 \right)$$

        Thus, 
        $$ a^{(i + 1)}b^{(j - 1)} = -a^{(i)}b^{(j)} \neq 0 $$
        $$\left(\mbox{resp. } a^{(i + 1)}a^{(j - 1)} = -a^{(i)}a^{(j)} \neq 0 \right)$$
        and we obtained a contradiction with minimality of $j$.
        So, $j = 0$.
	\end{proof}

    Consider a differential $k$-algebra $A$.
    A differential polynomial $p(x_1, \ldots, x_n) \in A\{ x_1, \ldots, x_n \}$ is called \textit{multilinear} if it is linear with respect to all variables.
    
    \begin{proposition}\label{prop:no_identity}
		Let $A$ be a prime differential algebra over a field $k$ of characteristic zero with nonzero derivation.
        Then, for any nonzero differential polynomial $p \in A\{ x_1, \ldots, x_n \}$ there are $a_1, \ldots, a_n \in A$ such that $p(a_1, \ldots, a_n) \neq 0$.
	\end{proposition}
    
    \begin{proof}
		Since $\Char k = 0$, the polynomial $p$ can be assumed to by multilinear (see \cite[Th. 1.3.8]{zaicev}).
        By $T(m, n)$ we denote the statement 
        
        $T(m, n) = $ \textit{"For any multilinear polynomial $p$ in variables $x_1, \ldots, x_m$ such that $p$ does not depend on $x_i^{(s)}$ if $s > n$, there exist $a_1, \ldots, a_m \in A$ such that $p(a_1, \ldots, a_m) \neq 0$".}
        
        Below we establish several properties of these statements.
        
		\begin{lemma}\label{lem:Tmn}
        \begin{enumerate}
			\item $T(1, 0)$ and $T(1, 1)$ are true.
            
            \item For all $m \geqslant 2$ and $n \geqslant 0$ statements $T(m - 1, n)$ and $T(1, n)$ imply $T(m, n)$.
            
            \item Assume that $n > 1$. If $T(m , n - 1)$ is true for all $m$, then $T(1, n)$ is true.
        \end{enumerate}
		\end{lemma}

		\begin{proof}
        \begin{enumerate}
			\item We need to consider a polynomial of the form $p(x) = ax^{\prime} + bx$.
	        Assume that $p(x) = 0$ for all $x \in A$.
	        If $a = 0$ it implies that $bx = 0$ for all $x \in A$.
            But due to Proposition~\ref{prop:prime_characterization} for any nonzero $x \in A$ there exists $n$ such that $bx^{(n)} \neq 0$.
        	If $a \neq 0$, for all $x, y \in A$ we have 
	        $$a(xy)^{\prime} + bxy = ax^{\prime}y + axy^{\prime} + bxy = 0 \mbox{ and } ax^{\prime} + bx = 0$$
	        These equations imply $axy^{\prime} = 0$.
            Choose nonzero $a_1, a_2 \in A$ such that $a_2^{\prime} \neq 0$ and $aa_1 \neq 0$.
	        Due to Proposition~\ref{prop:prime_characterization} there exists $n$ such that $aa_1\left(a_2^{(n)}\right)^{\prime} \neq 0$.
            
			\item Consider $p(x_1, \ldots, x_m)$ as a polynomial in $x_1$ over $A\{ x_2, \ldots, x_m \}$.
        	The statement $T(m - 1, n)$ implies that there exist $a_2, \ldots, a_m \in A$ such that $p(x_1, a_2, \ldots, a_m)$ is a nonzero polynomial in $x_1$ over $A$.
	        Then, $T(1, n)$ implies that there exists $a \in A$ such that $p(a, a_2, \ldots, a_m) \neq 0$.

			\item Let $p(x) = c_n x^{(n)} + \ldots + c_0 x$ be a differential polynomial such that $p(a) = 0$ for all $a \in A$.
        	The polynomial $q(x, y) = p(xy) - xp(y) - yp(x)$ is nonzero, because it involves the monomial $nc_n x^{(n - 1)}y^{\prime}$.
	        Moreover, $q(x, y)$ does not depend on $x^{(n)}$ and $y^{(n)}$.
	        Thus, $T(n - 1, 2)$ implies that there exist $a, b \in A$ such that $q(a, b) \neq 0$.
	        Hence, either $p(a) \neq 0$ or $p(b) \neq 0$.
		\end{enumerate}
		\end{proof}

		Let us return to the proof of Proposition~\ref{prop:no_identity}.
        We will prove that $T(m, n)$ is true for all $m \geqslant 1$ and $n \geqslant 0$ by induction on $n$.
        The base cases $n = 0$ and $n = 1$ follow from the first two statements of Lemma~\ref{lem:Tmn}.
        
        Assume that $T(m, n_0)$ is true for all $m \geqslant 1$ and $n_0 < n$.
        Then, the third statement of Lemma~\ref{lem:Tmn} implies that $T(1, n)$ is true.
        Now, we can prove $T(m, n)$ for all $m$ by induction on $m$ using the second statement of Lemma~\ref{lem:Tmn}. 
	\end{proof}
    
    \begin{remark}
		Proposition~\ref{prop:no_identity} is not true for semiprime algebras.
        Consider the algebra $A = k[a] \oplus k[b]$ with $a^{\prime} = 0$ and $b^{\prime} = 1$.
        Let $p(x) = ax^{\prime}$.
        Then, $p(x) = 0$ for all $x \in A$.
	\end{remark}

	Let us return to the proof of Theorem~\ref{th:diff_to_lie}.
    It is sufficient to consider principal ideals generated by $a\partial$ and $b\partial$.
	Let $A_{id}$ be the algebra $A$ with adjoined identity.
    Theorem \cite[3.1]{DeltaPrimeNot2} implies that $\Diff A_{id}$ is prime.
    Hence, there exists a Lie polynomial $L(x, y, z)$ over $\Diff A$, which is linear with respect to $x$ and $y$, such that $L(a\partial, b\partial, \partial) \neq 0$.
    There exists a nonzero $p(x, y, t) \in A \{ x, y, t \}$ such that $L(a\partial, b\partial, t\partial) = p(a, b, t)\partial$.
    Proposition~\ref{prop:no_identity} implies that there exists $c \in A$ such that $p(a, b, c) \neq 0$.
    So, $L(a\partial, b\partial, c\partial) \neq 0$.
    Hence, product of ideals generated by $a\partial$ and $b\partial$ in $\Diff A$ is nonzero.

\end{proof}

{\bf Acknowledegements. } The first author was supported by the Austrian Science Fund FWF grant Y464-N18.
Authors thank the referee for helpful comments and suggestions.


\end{document}